\renewcommand{\eqref}[1]{(\ref{#1})}   
\newtheorem{theorem}{Theorem}[section]
\newtheorem{lemma}[theorem]{Lemma}
\theoremstyle{definition}
\newtheorem{remark}{Remark}
\renewcommand{\l}{\left}
\renewcommand{\r}{\right}
\newtheorem{corollary}[theorem]{Corollary}
\newcommand{\modb}{(\textnormal{mod } b)}
\newcommand{\modq}{(\textnormal{mod }q)}
\newcommand{\modc}{(\textnormal{mod }c)}
\newcommand{\diff}{\textnormal{d}}
\newcommand{\Q}{{\mathbb Q}}
\newcommand{\R}{{\mathbb R}}
\newcommand{\Z}{{\mathbb Z}}
\newcommand{\C}{{\mathbb C}}
\newcommand{\N}{{\mathbb N}}
\newcommand{\abcdsum}{\sum_a\sum_b\sum_c\sum_d}
\newcommand{\be}{\begin{equation}}
\newcommand{\ee}{\end{equation}}
\newcommand{\ba}{\begin{equation}\begin{aligned}}
\newcommand{\ea}{\end{aligned}\end{equation}}
\numberwithin{equation}{section}
\newcommand{\ve}{\varepsilon}
\begin{document}
\title[Integral points]  {Counting $2\times2$ integer matrices with fixed trace and determinant}
\author{Rachita Guria}
\address {Department of Mathematics and Statistics\\
Indian Institute of Science Education and Research Kolkata}
\email{guriarachita2011@gmail.com}
\maketitle

\begin{abstract}
We count with a smooth weight the number of $2 \times 2$ integer matrices with a fixed characteristic polynomial with a main term and an error term using bounds for sums of Weyl sums for quadratic roots. 
\end{abstract}
	
\section{Introduction}

For an affine variety $V$ given by integer polynomials $f_i \in \Z[x_1,\dots,x_n]:$

\be 
V = \l\{ x \in \C ^n : f_i(x)=0, i = 1,\dots, k\r \},\nonumber
\ee 
it is an interesting problem to investigate the asymptotic for the number of 
integral points on $V$ in an expanding region. To be precise, let
\be 
N(X,V) = \#\l\{ m \in V(\Z): \|m \| \leq X \r\}, \nonumber
\ee
where $V(A)$ denotes the set of points on $V$ defined over some subring $A$ of $\C$. Here $\|\cdot \|$ is a suitable norm on $\R^n$ .
Using  Harmonic Analysis on homogenous spaces, Duke, Rudnick and Sarnak \cite{DRS} established, under some natural technical conditions including  that $V(\R)$ is (affine) symmetric (see \cite[\S 1]{DRS}) and that 
the norm $\|\cdot \|$ is a rotation-invariant Euclidean norm,  an asymptotic formula for $N(X, V)$ as $X\to \infty$. In particular, if $V=V_{n, k}$, the set of $n\times n$ matrices  with determinant $k\neq 0$, their result gives the following asymptotic formula for the number of such matrices in a  norm-ball:
\[
 N(X, V_{n,k}) \sim c_{n,k}X^{n^2-n}
\]
as $X \to \infty$, where $c_{n,k}>0$ is an explicit constant. A related interesting problem is to count integer matrices whose characteristic polynomial is some fixed polynomial $P\in \Z[x]$. The method of \cite{DRS} 
is not directly applicable to this problem as the variety in this question is not (affine) symmetric. \\
Eskin, Mozes and Shah \cite{EMS} used techniques from Ergodic Theory to prove an asymptotic formula for more general varieties and in particular, if $V=V_{n, P}$, the set of $n\times n$ integer matrices  ($n\geq 2$) whose characteristic polynomials equal $P$, some fixed integer polynomial
of degree $n$ which is irreducible over $\Q$,  they showed
 that 
\be
N(X,V_{n, P}) \sim c_P X^{\frac{n(n-1)}{2}}, \quad X \rightarrow \infty,
\nonumber \ee 
for some constant $c_P>0$. Here,  the norm is given by $\|x\| = \sqrt{\sum_{ij}x_{ij}^2}$.
A simpler proof of this result was later given by Shah \cite{Shah} who also gave an explicit description of the constant $c_P$. 
The above result  due to Eskin, Mozes and Shah only gives an asymptotic relation and it is desirable to obtain an asymptotic formula with a precise main term and an error term. In other words, one would like to have a formula of the following kind:
\[
N(X, V_{n, P})=c_P X^{\frac{n(n-1)}{2}}+O(E(X)),
\]
as $X \to \infty$, where $E(X)=o(X^{\frac{n(n-1)}{2}})$. 

Using the theory of Weyl sums for roots of quadratic congruences due to Duke, Friedlander and Iwaniec \cite{DFI}, we  prove a ``smoothed" version of a result of this kind in the simplest case $n=2$ where we take $\|\cdot\|=\|\cdot\|_{\infty}$, i.e., for a matrix $A=(a_{ij})_{1\leq i,j\leq n}$, 
 we define $\|A\|=\max\{|a_{ij}|:1\leq i, j \leq n\}$. To state our result precisely, let $M_n(\Z)$ denote the set of $n \times n $ integer matrices for $n \geq 2 $.
Define, for fixed $t, r\in \Z$,
\be
S= \l\{\begin{pmatrix} a & b\\ c &
  d\end{pmatrix}\in M_2(\Z) : a+d =t, ad-bc =r \r\}.
  \ee
Hence, $S$ is the set of $2\times 2$ integral matrices with a fixed characteristic polynomial 
\be 
P(x) = x^2 - tx + r. 
  \ee
Then $N(X, V_{2, P})$ is the number of matrices $A\in S$ with $\|A\|_{\infty}\leq X$ and this is same as the sum
\be\label{sum}
S(X)=\mathop{\abcdsum}_{\substack
{a+d=t, ad-bc=r
  \\ 
  |a|, |b|,|c|,|d|\leq X}}
1.
\ee
  Now we consider a smoothed version of the above sum:
\be\label{smoothed}
S_w(X)= \mathop{\abcdsum}_{
a+d=t, ad-bc=r
  }
   w\l(\frac{a}{X}\r)w\l(\frac{b}{X}\r)w\l(\frac{c}{X}\r)w\l(\frac{d}{X}\r),
    \ee
where $w$ is a fixed smooth function with compact support in $\l[\frac{1}{2},1\r]$. Our Theorem is:


\begin{theorem}\label{theorem} Let $t, r $ be integers with $t$ even  and let $D = \frac{t^2}{4} - r $ be a positive fundamental discriminant. Then, we have 
\be
S_w(X) = {\gamma}_{D}(1) M(X, D) + O(X^{1 - \frac{2}{1331}+\ve}),
\ee
where
\[
 M(X, D) = \iint \frac{1}{y} w\l(\frac{y}{X}\r) w\l(\frac{x}{X}\r)w\l(\frac{t-x}{X}\r)w\l(\frac{xt - x^2 - r }{yX}\r) \,\diff x \,\diff y,
 \]
and the constant $\gamma_D(1)$ is given in Lemma \ref{hooley}.
\end{theorem}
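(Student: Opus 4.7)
The idea is to parametrize matrices in $S$ by the three free entries $(a,b,c)$, with $d=t-a$ determined, and $c$ essentially determined by the determinant condition $bc=at-a^2-r$. Since $t$ is even, completing the square yields
\[
at-a^2-r \;=\; D-\l(a-\tfrac{t}{2}\r)^2,
\]
so the divisibility $b\mid at-a^2-r$ becomes $(a-t/2)^2\equiv D\pmod b$, i.e.\ a quadratic congruence with modulus $b$ and fixed discriminant $D$. This is exactly the setting to which the Duke--Friedlander--Iwaniec Weyl-sum machinery for roots of quadratic congruences applies.

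First I would rewrite
\[
S_w(X) \;=\; \sum_{b\neq 0}w\!\l(\tfrac{b}{X}\r)\sum_{\substack{a\in\Z\\ b\,\mid\,at-a^2-r}} w\!\l(\tfrac{a}{X}\r)w\!\l(\tfrac{t-a}{X}\r)w\!\l(\tfrac{at-a^2-r}{bX}\r).
\]
For fixed $b$, the inner sum over $a$ breaks into residue classes $a\equiv \alpha\pmod b$ where $\alpha\equiv t/2+\mu\pmod b$ and $\mu$ ranges over the $\rho_D(b)$ solutions of $\mu^2\equiv D\pmod b$. Writing $a=\alpha+bm$ and applying Poisson summation in $m$, I obtain
\[
\sum_{a\equiv\alpha\,(b)}F_b(a)\;=\;\frac{1}{b}\sum_{h\in\Z}\widehat{F_b}\!\l(\tfrac{h}{b}\r)e\!\l(\tfrac{h\alpha}{b}\r),
\]
where $F_b(a)=w(a/X)w((t-a)/X)w((at-a^2-r)/(bX))$. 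Splitting off $h=0$ from $h\neq 0$ separates the main term from the error.

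The main term collects the $h=0$ contributions as
\[
\sum_{b}w\!\l(\tfrac{b}{X}\r)\,\frac{\rho_D(b)}{b}\int F_b(a)\,\diff a,
\]
and the inner integral, after the substitution $x=a$, $y=b$, matches the integrand of $M(X,D)$. Using Hooley's asymptotic for partial sums of $\rho_D(b)/b$ (Lemma \ref{hooley}), summation by parts against the smooth weight in $b$ converts the discrete average into $\gamma_D(1)$ times the continuous integral, producing the asserted main term $\gamma_D(1)M(X,D)$ with a negligible error.

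The essential work is bounding the $h\neq 0$ contribution. After summing over $\mu$, the key object is the Weyl sum
\[
W_b(h)\;=\;\sum_{\mu^2\equiv D\,(b)}e\!\l(\tfrac{h\mu}{b}\r),
\]
twisted by the rapidly-decaying Fourier coefficient $\widehat{F_b}(h/b)/b$. Integration by parts in $\widehat{F_b}$ truncates the $h$-sum at $h\ll X^{\ve}$ times a mild factor, and inside this range the DFI bound $\sum_{b\sim B}W_b(h)\ll h^{\theta} B^{1-\delta+\ve}$ (for the DFI exponents yielding the saving $\delta=2/1331$) must be applied. I would dyadically decompose $b\in[X/2,X]$ and $h$, invoke the DFI estimate on each piece, and recombine to obtain the error bound $O(X^{1-2/1331+\ve})$.

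The main obstacle is the last step: aligning the smoothed, $b$-weighted configuration here with the precise shape of the DFI Weyl-sum estimate, and in particular handling the Fourier transforms $\widehat{F_b}(h/b)$ uniformly in $b$ and $h$ so that the DFI saving survives summation over both variables. Everything else --- the Poisson step, the identification of the main term, and the Hooley evaluation --- is relatively mechanical.
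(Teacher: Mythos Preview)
Your proposal is correct and follows essentially the same route as the paper: eliminate $c,d$, apply Poisson in $a$ modulo $b$, identify the $h=0$ term as $\gamma_D(1)M(X,D)$ via Hooley's lemma and partial summation, and bound the $h\neq 0$ contribution by the Duke--Friedlander--Iwaniec Weyl-sum estimate. The only cosmetic differences are that the paper detects the congruence by additive characters before Poisson (equivalent to your direct residue-class decomposition), and that no dyadic decomposition in $b$ is needed since $w$ already localizes $b\sim X$; your ``main obstacle'' is resolved simply by packaging all $b$-dependent weights into a single test function $v(y)$ satisfying the hypotheses of the DFI theorem.
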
 
\begin{remark}
It is not hard to see that $M(X,D) \asymp X$ for $X<t<2X$, and $r = o(X^2)$ by the Mean Value Theorem of Calculus. 
\end{remark}

\begin{remark}
In this kind of counting problem, the choice of the norm plays a key role. In both \cite{DRS} and \cite{EMS}, it is necessary to take a rotation-invariant norm for the respective methods to work. On the contrary, for the method that we use, it is easiest to work with the Supremum norm. 
\end{remark}

\begin{remark}
It should be possible to prove a similar formula for the sum $S(X)$ given by \eqref{sum} but the proof  will be technically much more challenging. 
\end{remark}

\begin{remark}
Unlike \cite{EMS}, we do not need to assume that the polynomial $P$ is irreducible over $\Q$. 

\end{remark}
\begin{remark}
More generally, we can get a similar result for the number of integral points on an inhomogeneous quadric of the form $\alpha X^2 + \beta X + \gamma YZ + \delta = 0$, for $\alpha, \beta, \gamma, \delta \in \Z $.
\end{remark}

The following result, which can be compared with \cite[Theorem 2]{H}, is an immediate consequence of the proof of the theorem. 
\begin{corollary}
Let $g(u) = u^2 + 2pu +q $ be a monic integer polynomial such that $\Delta = p^2 - q$ is a positive fundamental discriminant. Then we have 
\be
 \sum_ n\tau _{X, w}(g(n)) w\l(\frac{n}{X}\r) = \gamma_{\Delta}(1) \iint \frac{1}{y} w\l(\frac{y}{X}\r)w\l(\frac{x}{X}\r)w\l(\frac{x^2 + 2px +q}{yX}\r)\,\diff x\,\diff y+ O(X^{1 - \frac{2}{1331}+\ve}) ,
 \ee
where $\tau_{X, w}$ is a smoothed restricted divisor function defined by:
\ba
\tau_ {X, w} (n) = \mathop{\sum_ b \sum _c}_{n = bc} w\l(\frac{b}{X}\r)w\l(\frac{c}{X}\r).
\nonumber \ea
\end{corollary}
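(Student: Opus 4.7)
The plan is to follow the proof of Theorem \ref{theorem} step by step. First, open up the smoothed restricted divisor function and interchange the order of summation so that $b$ becomes the outer variable:
\begin{equation*}
\sum_n w\l(\frac{n}{X}\r) \tau_{X,w}(g(n)) = \sum_b w\l(\frac{b}{X}\r) \sum_{\substack{n \\ b \mid g(n)}} w\l(\frac{n}{X}\r) w\l(\frac{g(n)}{bX}\r).
\end{equation*}
Via the identity $(n+p)^2 = g(n) + \Delta$, the inner divisibility condition $b \mid g(n)$ is equivalent to $(n+p)^2 \equiv \Delta \pmod{b}$. For fixed $b$ the inner sum decomposes as a sum over the roots $\rho \pmod{b}$ of $g(\rho) \equiv 0 \pmod{b}$, followed by a sum over $n \equiv \rho \pmod{b}$. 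This is precisely the structure handled in the proof of Theorem \ref{theorem}, where the analogous quadratic congruence appears with $\Delta$ now playing the role of $D$ (after writing $bc = -(a^2 - ta + r)$); the analytic machinery developed there therefore applies verbatim.

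For each $b$ and each root $\rho$, I would apply Poisson summation in the variable $n$. The zero frequency contributes, per root,
\begin{equation*}
\frac{1}{b} \int w\l(\frac{x}{X}\r) w\l(\frac{g(x)}{bX}\r) \diff x.
\end{equation*}
Summing over $b$ weighted by $w(b/X)$, and invoking Lemma \ref{hooley} to replace the root-counting function of the quadratic congruence by its average value $\gamma_{\Delta}(1)$, converts the outer sum into a Riemann sum for the integral in the variable $y = b$, producing the claimed main term $\gamma_{\Delta}(1) \iint \cdots \diff x\, \diff y$. Note that the absence here of the second weight $w((t-a)/X)$ that is present in the theorem only simplifies this step, since the $x$-integrand becomes smoother.

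The nonzero Poisson frequencies $k \ne 0$ contribute an error involving the Weyl sums $\sum_{\rho \pmod{b}} e(k\rho/b)$ over the roots of $g \pmod{b}$, multiplied by the Fourier coefficients of $w$ in the $n$-variable. Applying the Duke--Friedlander--Iwaniec bounds for Weyl sums over roots of quadratic congruences -- the same input used in the theorem -- and summing the resulting estimates over $b$ and $k$, produces the error $O(X^{1 - \frac{2}{1331} + \ve})$. This Weyl-sum estimation is the main obstacle, exactly as in the proof of the theorem: dropping one smooth weight cannot worsen any of the oscillatory cancellation bounds that drive the exponent $\frac{2}{1331}$, which is precisely why the corollary is an immediate consequence of the proof of Theorem \ref{theorem}.
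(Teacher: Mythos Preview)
Your proposal is correct and follows exactly the route the paper intends: the paper gives no separate proof of the corollary but states it is ``an immediate consequence of the proof of the theorem,'' and your outline is precisely that reconstruction --- open up $\tau_{X,w}$, reduce $b\mid g(n)$ to the quadratic congruence $(n+p)^2\equiv\Delta\pmod b$, apply Poisson in $n$, extract the main term from the zero frequency via Lemma~\ref{hooley}, and bound the nonzero frequencies by the Duke--Friedlander--Iwaniec Weyl-sum estimate. The only structural difference from the theorem is the absence of the extra weight $w((t-a)/X)$, which, as you note, only simplifies matters.
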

\subsection*{Acknowledgements}
The author thanks Satadal Ganguly for helpful comments. This work is part of the author’s PhD thesis. She thanks
Indian Institute of Science Education and Research, Kolkata and all her teachers, especially Subrata Shyam Roy, for
all the support and also thanks Indian Statistical Institute, Kolkata where this work was carried out for an excellent
working atmosphere.
\section{Preliminaries}
In this section we collect together some known results that will be used in the proof of our theorem. 
\begin{lemma} Suppose that both $f$, $\hat{f}$ are in $L^1(\R)$ and have bounded variation. Then, for $q \in \N $, we have
\be\label{10}  \sum_{\substack{n \equiv \alpha \modq \\ n \in \Z}}f\left(n\right)= \frac{1}{q}\sum_{n \in \Z}e \left(\frac{\alpha n}{q}\right)\hat{f}\left(\frac{n}{q}\right) \ee
\end{lemma}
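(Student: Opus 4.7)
The plan is to reduce the identity to the classical Poisson summation formula $\sum_{m \in \Z} g(m) = \sum_{n \in \Z} \hat{g}(n)$, applied to a suitably rescaled and shifted copy of $f$. First I would parametrize the residue class: every integer $n$ with $n \equiv \alpha \pmod{q}$ can be written uniquely as $n = qm + \alpha$ for some $m \in \Z$, so that
\[
\sum_{\substack{n \equiv \alpha \pmod{q} \\ n \in \Z}} f(n) \;=\; \sum_{m \in \Z} f(qm + \alpha) \;=\; \sum_{m \in \Z} g(m),
\]
where I set $g(x) := f(qx + \alpha)$.

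Next I would compute $\hat{g}$ in terms of $\hat{f}$. The substitution $u = qx + \alpha$ in the defining integral gives
\[
\hat{g}(y) \;=\; \int_{\R} f(qx + \alpha)\, e(-xy)\, dx \;=\; \frac{1}{q}\, e\!\left(\frac{\alpha y}{q}\right) \hat{f}\!\left(\frac{y}{q}\right).
\]
Specializing to $y = n \in \Z$ and feeding this into classical Poisson summation, I would then obtain
\[
\sum_{m \in \Z} g(m) \;=\; \sum_{n \in \Z} \hat{g}(n) \;=\; \frac{1}{q} \sum_{n \in \Z} e\!\left(\frac{\alpha n}{q}\right) \hat{f}\!\left(\frac{n}{q}\right),
\]
which matches the right-hand side of \eqref{10}.

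The only point that requires any care is the justification that classical Poisson summation applies to $g$. Since $f \in L^1(\R)$ and has bounded variation, the affine composition $g(x) = f(qx + \alpha)$ inherits both properties, and the computation above shows $\hat{g}(y) = q^{-1} e(\alpha y / q) \hat{f}(y/q)$, which lies in $L^1(\R)$ and inherits bounded variation from $\hat{f}$. One then invokes a version of Poisson summation valid under precisely these hypotheses — for instance, the formulation guaranteeing absolute convergence of both sides together with the pointwise identity (interpreting $g$ at any jump as the symmetric mean of its one-sided limits, should $f$ fail to be continuous). The main obstacle is thus not analytical depth but simply quoting the correct form of Poisson summation matched to the regularity hypotheses in the lemma.
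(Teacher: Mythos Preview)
Your argument is correct and is exactly the standard derivation of Poisson summation in residue classes from the classical formula. The paper itself does not give a proof but simply cites \cite[Eq.~(4.25)]{IK}, so your write-up is in fact more detailed than what appears in the paper; the underlying approach is the same.
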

\begin{proof}
See e.g. \cite [Eq. (4.25)]{IK}.
\end{proof}
\begin{lemma}\label{hooley} Define \ba\rho(k)= \#\{\nu \modb :  \nu ^2  \equiv D (\textnormal{mod }k)\}.\ea
\\
For $y \geq 1$, we have \be \sum_{k \leq y} \rho(k) = y\gamma_{D}(1)+ O_{\ve}\l(y^{\frac{3}{4}}\r),\nonumber \ee 
where 
\be \gamma_{D}(s) = \frac{K(s)M(s)}{\zeta(2s)}.\nonumber\ee
Here
\be 
K(s)= (1+\frac{1}{2^s})^{-1}\sum\limits_{n=0}^{\infty}\frac{\rho(2^n)}{2^{ns}} \nonumber 
\ee 
and \be 
M(s)= \sum_{\substack{d^2|D \\ (d,2)=1}}\frac{d}{d^{2s}}\sum\limits_{\substack{l=1 \\ (l,2)=1}}^{\infty}\l(\frac{D/d^2}{l}\r)\frac{1}{l^s} = \sum_{\substack{d^2|D \\ (d,2)=1}}\frac{d}{d^{2s}} L_{\l(\frac{D}{d^2}\r)}(s).  \nonumber 
\ee
\end{lemma}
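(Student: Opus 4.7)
The plan is to prove the lemma by a classical Dirichlet series and contour integration argument. First I would set up the generating series
\[
F(s) = \sum_{k \geq 1} \frac{\rho(k)}{k^s},
\]
and derive its Euler product. The function $\rho$ is multiplicative: by the Chinese Remainder Theorem, a solution of $\nu^2 \equiv D \pmod{k_1 k_2}$ with $\gcd(k_1,k_2)=1$ corresponds bijectively to a pair of solutions modulo $k_1$ and $k_2$. At an odd prime $p\nmid D$, Hensel's lemma yields $\rho(p^n) = 1 + \chi_D(p)$ for every $n\ge 1$, where $\chi_D=\l(\frac{D}{\cdot}\r)$ is the Kronecker symbol associated with $D$; the local factor is therefore
\[
\sum_{n\ge 0}\frac{\rho(p^n)}{p^{ns}} = \frac{1+\chi_D(p)p^{-s}}{1-p^{-s}},
\]
which is exactly the local factor of $\zeta(s) L(s,\chi_D)/\zeta(2s)$ at $p$. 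At $p=2$ and at the odd primes dividing $D$, the Euler factors have to be computed by hand; the definitions of $K(s)$ and $M(s)$ in the statement are tailored precisely so that the global identity
\[
F(s) = \zeta(s)\,\gamma_D(s), \qquad \gamma_D(s) = \frac{K(s)M(s)}{\zeta(2s)},
\]
holds in $\Re s>1$. Specifically, the normalization $(1+2^{-s})^{-1}$ in $K(s)$ cancels the local $\zeta(s)/\zeta(2s)$-factor at $p=2$, leaving only $\sum_n \rho(2^n)/2^{ns}$; and the sum over $d^2\mid D$ in $M(s)$ groups the contributions of the odd divisors of $D$ according to their squarefull part, producing the $L$-function factor $L(s,(\tfrac{D/d^2}{\cdot}))$ at each level.

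Next I would analytically continue $\gamma_D(s)$ to $\Re s > \tfrac12$, using that $L(s,\chi_D)$ is entire and $1/\zeta(2s)$ is analytic in that half-plane (save for a harmless simple zero where $\zeta$ has its pole, i.e.\ at $s=\tfrac12$, which is off the contour). Hence $F(s)$ continues meromorphically to $\Re s > \tfrac12$ with a unique simple pole at $s=1$ of residue $\gamma_D(1)$.

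The asymptotic formula is then extracted by a truncated Perron formula:
\[
\sum_{k\le y}\rho(k) = \frac{1}{2\pi i}\int_{c-iT}^{c+iT} F(s)\,\frac{y^s}{s}\,\diff s + O\l(\frac{y^{1+\ve}}{T}\r),
\]
with $c=1+1/\log y$. Shifting the contour to $\Re s = \tfrac12+\ve$ picks up the residue at $s=1$, which gives the main term $\gamma_D(1)\,y$. The horizontal and vertical segments are then controlled using convexity estimates of the form $\zeta(\sigma+it),\;L(\sigma+it,\chi_D)\ll (1+|t|)^{(1-\sigma)/2+\ve}$; choosing $T$ to balance the Perron truncation error against the vertical line contribution yields the claimed error bound $O_{\ve}(y^{3/4+\ve})$.

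The main obstacle is the bookkeeping in the Euler product at the ramified primes $p=2$ and $p\mid D$. Working out exactly how $K(s)$ and $M(s)$ package these local factors -- and in particular verifying that the global identity $F(s)=\zeta(s)\gamma_D(s)$ holds with the precise $\gamma_D(1)$ appearing in the main term -- is the delicate part. Once this algebraic identification is in place, the analytic continuation and the contour-shift estimate are standard.
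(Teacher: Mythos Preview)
Your approach is correct, but note that the paper does not actually prove this lemma: its ``proof'' is a one-line citation to Hooley's paper \cite{H}, specifically Eq.~(7) there, together with the remark that the implied constant is independent of $D$. What you have sketched is essentially Hooley's own argument --- form the Dirichlet series $F(s)=\sum_k \rho(k)k^{-s}$, use multiplicativity to factor it as $\zeta(s)\gamma_D(s)$, continue past $\Re s=1$, and extract the asymptotic by Perron and a contour shift to $\Re s=\tfrac12+\ve$. So your route is genuinely more informative than the paper's, though not different in spirit from the source being cited. Two small remarks: first, the contour argument as you describe it naturally produces $O_\ve(y^{3/4+\ve})$, which is consistent with the $O_\ve$ subscript in the statement; second, since in the paper's application $D$ is assumed to be a positive \emph{fundamental} discriminant, the sum over odd $d$ with $d^2\mid D$ in $M(s)$ reduces to the single term $d=1$, so the ``delicate bookkeeping'' at the ramified odd primes that you flag as the main obstacle is in fact quite short here.
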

\begin{proof}
See \cite[Eq. (7)]{H}. The $O$-term is independent of $D$.
\end{proof}

\section{Proof of Theorem \ref{theorem} }

Let us briefly outline the method of the proof. At first, we replace $d$ by $t-a$, and remove $c$ in the resulting equation $a(t-a)-bc=r$ by the congruence condition $ at - a^2 - r \equiv 0 (\textnormal{mod }b) $. At this stage, a trivial estimation gives $S_w(X)\ll X$. Next, we use the Poisson summation formula to evaluate the sum over $a$. We separate the zero frequency and the non-zero frequencies. The zero frequency yields the main term, whereas in regard to the non-zero frequencies, the key observation  is that the sum over $b$ is precisely a sum of the Weyl sums for quadratic roots averaged over the moduli $b \sim X $ weighted by a nice test function and we use \cite[ Theorem 1.1]{DFI} to estimate it.
\begin{proof}
\ba
S_w(X)&= \mathop{\sum_a \sum_b \sum_c}_{at - a^2 - r = bc}w\l(\frac{a}{X}\r)w\l(\frac{t-a}{X}\r)w\l(\frac{b}{X}\r)w\l(\frac{c}{X}\r)\\
&= \mathop{\sum_a \sum_b }_{at - a^2 - r \equiv 0 \modb }w\l(\frac{a}{X}\r)w\l(\frac{t-a}{X}\r)w\l(\frac{at - a^2 - r }{bX}\r)w\l(\frac{b}{X}\r)\\
& = \mathop{\sum_a \sum_b }w\l(\frac{a}{X}\r)w\l(\frac{t-a}{X}\r)w\l(\frac{at - a^2 - r }{bX}\r)w\l(\frac{b}{X}\r) \frac{1}{b}\sum_{h \modb} e\l(\frac{h(at - a^2 - r)}{b}\r).\nonumber
 \ea We split the inner sum into residue classes modulo $b$ and apply Possion summation formula on the $a$-sum \ba \label{a_1 sum}
\sum_ a w\l(\frac{a}{X}\r)w\l(\frac{t-a}{X}\r)w\l(\frac{at - a^2 - r }{bX}\r)e\l(\frac{h(at - a^2)}{b}\r) &= \frac{1}{b}\sum_{n \in \Z} \hat{f}\l(\frac{n}{b}\r)\\
&\sum _{\nu \modb} e\l(\frac{h(\nu t- \nu ^2)+n\nu}{b}\r),
\ea where \be f(x)= w\l(\frac{x}{X}\r)w\l(\frac{t-x}{X}\r)w\l(\frac{xt - x^2 - r }{bX}\r). \nonumber \ee 
Note that, by repeated integration by parts, one can show that $\hat{f}\l(\frac{n}{b}\r)$ is negligible unless $n \ll X^{\ve}$ for any $\ve >0.$
Substituting \eqref{a_1 sum}, we can recast $S_w(X)$ as \ba
S_w(X)&= \sum_{n \in \Z}  \sum_{b} \frac{1}{b^2}w\l(\frac{b}{X}\r)\hat{f}\l(\frac{n}{b}\r)\sum _{\nu \modb} e\l(\frac{n\nu}{b}\r)\sum_{h \modb} e\l(\frac{h(\nu t - \nu ^2 - r)}{b}\r)\\
&= \sum_{n \in \Z}  \sum_{b} \frac{1}{b}w\l(\frac{b}{X}\r)\hat{f}\l(\frac{n}{b}\r)\sum _{\substack{\nu \modb \\ \nu ^2 -\nu t+ r \equiv 0 \modb }} e\l(\frac{n\nu}{b}\r).
\nonumber \ea Now we will treat the zero frequency and the non-zero frequency of the $n$-sum separately and thus write
\ba S_w(X) = A(X)+B(X), \ea where

\be A(X) = \int w\l(\frac{x}{X}\r)w\l(\frac{t-x}{X}\r)\l\{ 
\sum_{b} \frac{1}{b}w\l(\frac{b}{X}\r)w\l(\frac{xt - x^2 - r }{bX}\r) \sum _{\substack{\nu \modb \\  \nu ^2- \nu t  + r \equiv 0 \modb }} 1 \r\}\diff x \ee

and

\ba \label{b_sum} B(X)& = \int w\l(\frac{x}{X}\r)w\l(\frac{t-x}{X}\r)\l\{ \sum_{\substack{n \in \Z \\ n \neq 0}} \sum_{b} \frac{1}{b}w\l(\frac{b}{X}\r)w\l(\frac{xt - x^2 - r }{bX}\r)e \l(\frac{-nx}{b}\r)\r.\\
&\l.\sum _{\substack{\nu \modb \\ \nu ^2 -\nu t + r \equiv 0 \modb }}e\l(\frac{n\nu}{b}\r)\r\} \diff x.
\ea

\subsection{The Main Term} In our case, \ba\rho(b)= \#\{\nu \modb :  \nu ^2 - \nu t + r \equiv 0 \modb\}. \nonumber\ea 
Clearly,
\ba 
A(X) = \int w\l(\frac{x}{X}\r)w\l(\frac{t-x}{X}\r)\l\{ \sum_{b} \frac{1}{b}w\l(\frac{b}{X}\r)w\l(\frac{xt - x^2 - r }{bX}\r)\rho(b) \r\}\diff x .\nonumber 
\ea 

By partial summation on the $b$-sum, we obtain
\ba
A(X) = -\int  w\l(\frac{x}{X}\r)w\l(\frac{t-x}{X}\r) \l[\int \frac{d}{dy}\l(\frac{1}{y} w\l(\frac{y}{X}\r)w\l(\frac{xt - x^2 - r }{yX}\r)\r) \l(\sum_{b \leq y} \rho(b)\r)  \,\diff y\r] \,\diff x .\nonumber
 \ea
Applying Lemma \ref{hooley} to the $b$-sum, and using integration by parts later on the $y$ integral,
\ba\label{A} A(X) &= \gamma_{\frac{t^2}{4}-r}(1)\iint \frac{1}{y} w\l(\frac{y}{X}\r) w\l(\frac{x}{X}\r)w\l(\frac{t-x}{X}\r)w\l(\frac{xt - x^2 - r }{yX}\r) \,\diff x \,\diff y \\
&+ O_{\ve}(X^{\frac{3}{4}}).\ea
\subsection{The Error Term} We start the estimation by stating the following theorem from \cite[Theorem 1.1]{DFI}. \begin{theorem}\label{thm_1}
Let $h \geq 1, q\geq 1$, and D a positive fundamental discriminant. Let $v$ be a smooth function supported on $Y \leq y \leq 2Y $ with $Y \geq 1$, such that 
\ba \l|v(y)\r|\leq 1, \quad \quad \quad Y^3\l| v'''(y)\r| \leq 1.\nonumber \ea Define the Weyl sum over roots of quadratic congruences: \ba W_h(D;c) = \sum_{\substack{b \modc \\ b^2 \equiv D \modc}}e\l(\frac{hb}{c}\r), \nonumber\ea 
and \be W_h(D) = \sum_{c \equiv 0 (\textnormal{mod }q)}v(c) W_h(D;c). \ee  
Then \ba W_h(D) \ll h^{\frac{1}{4}}\l(Y+ h\sqrt{D}\r)^{\frac{3}{4}}D^{\frac{1}{8}-\frac{1}{1331}},\nonumber \ea where the implied constant is absolute.
\end{theorem}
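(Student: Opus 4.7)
The strategy I would follow is the one developed by Duke, Friedlander, and Iwaniec: convert the sum over roots of the quadratic congruence into a spectral sum on a metaplectic cover of $\mathrm{GL}_2$, and extract a power saving from nontrivial bounds on Fourier coefficients of half-integral weight forms. The first step is to exploit the classical bijection between solutions $b \pmod c$ of $b^2 \equiv D \pmod c$ and (classes of) integral binary quadratic forms of discriminant $D$: writing $b^2 - D = 4ac \cdot m$ attaches to each root a form $[a, b, cm]$ of discriminant $D$, so one can reparametrize $(b,c)$ by the coefficients of such forms. Combined with a reciprocity relation, the exponential $e(hb/c)$ turns into a Kloosterman/Sali\'e type fraction, and the weight $v(c)$ becomes a smooth weight on one of the new variables.

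The second step is to recognize $W_h(D;c)$ as (essentially) a Sali\'e sum, so that the smooth average $W_h(D) = \sum_{c\equiv 0\,(q)} v(c) W_h(D;c)$ is a smoothly weighted sum of Sali\'e sums in a fixed arithmetic progression. One then applies a Kuznetsov-type (Proskurin) formula on the theta cover to spectrally decompose this sum into contributions from (i) Fourier coefficients of holomorphic cusp forms of weight $1/2$, (ii) Maass forms of weight $1/2$, and (iii) the continuous (Eisenstein) spectrum. The shape of the test transform is controlled by the smoothness hypotheses $|v|, Y^3|v'''|\leq 1$, which force negligible contribution from spectral parameters outside a controlled range depending on $h$, $Y$, and $\sqrt{D}$.

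The third step is to estimate each spectral piece. The Eisenstein contribution is handled by convexity/Weyl bounds for Dirichlet $L$-functions with the quadratic character $\chi_D$, while the cuspidal contributions require a nontrivial bound $|\rho_f(D)| \ll D^{\theta+\epsilon}$ on Fourier coefficients at the fundamental discriminant $D$. Summing up and optimizing, the trivial terms give the factor $h^{1/4}(Y + h\sqrt D)^{3/4}D^{1/8}$, and the saving $D^{-1/1331}$ comes from whichever of the two inputs (Burgess-type bound for $L(1/2,\chi_D)$ or a subconvex coefficient bound) turns out weaker after the optimization.

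The main obstacle is quantitative: achieving the explicit exponent $1/1331$ requires carefully balancing the three separate estimates against the size of the relevant spectral parameters, against the range of the $c$-variable (which is essentially $Y$), and against the frequency $h$. In particular, one must keep the dependence on $h$, $Y$, $D$ uniform while inserting subconvex bounds, so that the final estimate is a genuine power saving over the convexity bound $D^{1/8}$. This careful interplay between the metaplectic Kuznetsov formula and the best available Fourier coefficient bounds is where the bulk of the work in \cite{DFI} lies; for our purposes we simply invoke their Theorem~1.1 as stated.
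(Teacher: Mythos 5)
The paper does not actually prove this statement; it is quoted verbatim as Theorem 1.1 of Duke--Friedlander--Iwaniec \cite{DFI}, which is precisely what you do in your closing sentence, so your proposal matches the paper's treatment. Your sketch of the DFI machinery (parametrizing roots of $b^2\equiv D\ (\mathrm{mod}\ c)$ by binary quadratic forms of discriminant $D$, passing to Sali\'e sums, applying a metaplectic Kuznetsov formula, and extracting the $D^{-1/1331}$ saving from subconvexity-type inputs) is a reasonable account of where the result comes from, but for the purposes of this paper the citation alone is the intended justification.
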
  Rearranging \eqref{b_sum}, we write $B(X)$ as \ba B(X)& = \int w\l(\frac{x}{X}\r)w\l(\frac{t-x}{X}\r)\l\{\sum_{\substack{n \in \Z \\ n \neq 0}} \sum_{b} \frac{1}{b}w\l(\frac{b}{X}\r)w\l(\frac{xt - x^2 - r }{bX}\r)e \l(\frac{-nx}{b}\r)e\l(\frac{nt}{2b}\r)\r.\\
&\l. \sum _{\substack{\nu  \modb \\ \nu  ^2 \equiv \frac{t^2}{4}-r \modb }}e\l(\frac{n\nu }{b}\r) \r\}\diff x .\nonumber\ea
Now we are ready to apply Theorem \ref{thm_1}. Let us take  \ba v(y)=\frac{X}{y}w\l(\frac{y}{X}\r)w\l(\frac{xt - x^2 - r }{yX}\r)e \l(\frac{-nx}{y}\r)e\l(\frac{nt}{2y}\r),\nonumber \ea which satisfies all the conditions of the theorem with $D = \frac{t^2}{4}-r$, and $h =n$. 
\ba \label{B} B(X) &\ll \frac{1}{X}\int w\l(\frac{x}{X}\r)w\l(\frac{t-x}{X}\r) X^{1 - \frac{2}{1331}+\ve}\diff x\\
& \ll X^{1 - \frac{2}{1331}+\ve}.   \ea

By \eqref{A} and \eqref{B}, we can therefore conclude that

\ba S_w(X) &= \gamma_{\frac{t^2}{4}-r}(1)\iint \frac{1}{y} w\l(\frac{y}{X}\r) w\l(\frac{x}{X}\r)w\l(\frac{t-x}{X}\r)w\l(\frac{xt - x^2 - r }{yX}\r) \,\diff x \,\diff y \\
&+ O\l(X^{1 - \frac{2}{1331}+\ve}\r). \nonumber \ea
\end{proof}


\begin{thebibliography}{xxxxx}
 
\bibitem [DFI] {DFI} W. Duke, J. Friedlander, and H. Iwaniec, Weyl sums for quadratic roots. \emph{Int. Math. Res. Not. IMRN, (2012), 2493–2549.} 

\bibitem [DRS] {DRS} W. Duke, Z. Rudnick, and P. Sarnak, Density of integer points on affine homogeneous varieties,\emph{ Duke Math. J. 71 (1993), 143-179.}

\bibitem [EM] {EM} A. Eskin and C. McMullen, Mixing, counting and equidistribution in Lie groups,\emph{
Duke Math. J. 71 (1993), 181-209.}

\bibitem [EMS] {EMS} 
A. Eskin, S. Mozes, N. A. Shah, Unipotent flows and counting lattice points on homogeneous varieties. \emph{Ann. of Math. (2) 143 (1996), 253–299.}

\bibitem [H] {H} C. Hooley, On the number of divisors of quadratic polynomials. \emph{Acta Arithmetica, (1963), 97–114.}

\bibitem [IK]{IK}
H. Iwaniec and E. Kowalski, \emph{Analytic number theory}, American Mathematical Society Colloquium Publications, American Mathematical Society, Providence, RI, 2004, xii+615 pp.


\bibitem[Shah]{Shah} N. Shah, Counting integral matrices with a given characteristic polynomial, \emph{Sankhya A 62 (2000), 386–412.}

\end{thebibliography}
\end{document}